\newtheorem{thm}{Theorem}[section]
\newtheorem{lem}[thm]{Lemma}
\theoremstyle{definition}
\newcommand{\scr}[1]{\mathscr #1}
\definecolor{wco}{rgb}{0.5,0.2,0.3}
\numberwithin{equation}{section} \theoremstyle{remark}
\newtheorem{rem}{Remark}[section]
\newcommand{\ua}{\uparrow}
\title{{\bf Convergence Rate of EM Scheme for  SDDEs}
}
\author{
{\bf  Jianhai Bao and Chenggui Yuan}\\
 \footnotesize{Department of Mathematics,
Swansea University, Singleton Park, SA2 8PP, UK}\\
\footnotesize{ majb@Swansea.ac.uk, C.Yuan@Swansea.ac.uk}}
\begin{document}
\def\R{\mathbb R}  \def\ff{\frac} \def\ss{\sqrt} \def\B{\mathbf
B}
\def\N{\mathbb N} \def\kk{\kappa} \def\m{{\bf m}}
\def\dd{\delta} \def\DD{\Delta} \def\vv{\varepsilon} \def\rr{\rho}
\def\<{\langle} \def\>{\rangle} \def\GG{\Gamma} \def\gg{\gamma}
  \def\nn{\nabla} \def\pp{\partial} \def\EE{\scr E}
\def\d{\text{\rm{d}}} \def\bb{\beta} \def\aa{\alpha} \def\D{\scr D}
  \def\si{\sigma} \def\ess{\text{\rm{ess}}}
\def\beg{\begin} \def\beq{\begin{equation}}  \def\F{\scr F}
\def\Ric{\text{\rm{Ric}}} \def\Hess{\text{\rm{Hess}}}
\def\e{\text{\rm{e}}} \def\ua{\underline a} \def\OO{\Omega}  \def\oo{\omega}
 \def\tt{\tilde} \def\Ric{\text{\rm{Ric}}}
\def\cut{\text{\rm{cut}}} \def\P{\mathbb P} \def\ifn{I_n(f^{\bigotimes n})}
\def\C{\scr C}      \def\aaa{\mathbf{r}}     \def\r{r}
\def\gap{\text{\rm{gap}}} \def\prr{\pi_{{\bf m},\varrho}}  \def\r{\mathbf r}
\def\Z{\mathbb Z} \def\vrr{\varrho} \def\ll{\lambda}
\def\L{\scr L}\def\Tt{\tt} \def\TT{\tt}\def\II{\mathbb I}
\def\i{{\rm in}}\def\Sect{{\rm Sect}}\def\E{\mathbb E} \def\H{\mathbb H}
\def\M{\scr M}\def\Q{\mathbb Q} \def\texto{\text{o}} \def\LL{\Lambda}
\def\Rank{{\rm Rank}} \def\B{\scr B} \def\i{{\rm i}} \def\HR{\hat{\R}^d}
\def\to{\rightarrow}\def\l{\ell}
\def\8{\infty}

\maketitle

\begin{abstract} In this paper we investigate the convergence rate
of Euler-Maruyama scheme for a class of stochastic differential {\it
delay} equations, where the corresponding coefficients may be {\it
highly nonlinear} with respect to the delay variables. In
particular, we reveal that the convergence rate of Euler-Maruyama
scheme is $\frac{1}{2}$ for the Brownian motion case, while show
that
 it is best to use the mean-square convergence for the pure jump case,
and that the order of mean-square convergence is close to
$\frac{1}{2}$.
\end{abstract}
\noindent
 AMS subject Classification:\  65C30, 65L20, 60H10.    \\
\noindent
 Keywords: stochastic differential delay equation,  highly nonlinear, jumps, EM scheme, convergence rate.
 \vskip 2cm

\section{Introduction}
Since most stochastic differential equations (SDEs) can not be
solved explicitly, numerical methods have become essential.
Recently, there is extensive literature in investigating the strong
convergence, weak convergence or sample path convergence  of
numerical schemes for SDEs, e.g., in \cite{g98} for SDEs with a
monotone condition, in \cite{hk05,jwy09,PB10} for SDEs with jumps,
in \cite{hu96, jwy09,KP00,ms03} for stochastic differential delay
equations (SDDEs) 
and in \cite{hk05,hms02} for SDEs with a one-side Lipschitz
condition. For the comprehensive monographs on numerical approximate
methods of SDEs, we can also refer to \cite{KP99,PB10,S97}. Although
the results on convergence of Euler-Maruyama (EM) schemes are
substantial, there are limited ones on convergence rate  under
weaker conditions than global Lipschitz condition and linear growth
condition. For example, a recent work in \cite{gr11} 
reveals the convergence rate of EM schemes for a class
of SDEs under a H\"older condition, and, with local Lipschitz
constants satisfying  a logarithm growth condition, \cite{ym08} and
\cite{bbmy11,jwy09} discuss the convergence rate of EM approximate
methods for SDEs and stochastic functional differential equations
with jumps, respectively. We should also point out that the strong
convergence of EM schemes for SDDEs is, in general, discussed under
a linear growth condition or bounded moments of analytic and
numerical solutions, e.g., \cite{jwy09,KP00,ms03}, and that the
convergence rate \cite{bbmy11,jwy09} is also revealed under a linear
growth condition.

To further motivate our work, we first consider an SDDE on
$\mathbb{R}$
\begin{equation}\label{eq35}
\d X(t)=\{aX(t)+bX^3(t-\tau)\}\d t+cX^2(t-\tau)\d W(t),
\end{equation}
where $a,b,c\in\mathbb{R}, \tau>0$, are constant, and $W(t)$ is a
scalar Brownian motion. It is easy to observe that both the drfit
coefficient and the diffusion coefficient are {\it highly nonlinear}
especially with respect to the delay arguments. Therefore, the
existing convergence results, e.g., \cite{jwy09,KP00,ms03}, can not
cover Eq. \eqref{eq35}, and the convergence rate of the
corresponding EM scheme can not also be revealed by the techniques
of \cite{bbmy11,jwy09} as we have explained in the end of the last
paragraph. On the other hand, our work is also enlightened by the
recent work in \cite{gr11} such that consider SDE on $\mathbb{R}$
\begin{equation*}
\d X(t)=\{f(t,X(t))+g(t,X(t))\}\d t+\sigma(t,X(t))\d W(t),
\end{equation*}
and discuss the convergence rate of the associated EM method, where
$g$ is H\"older continuous, of {\it linear growth, and monotone
decreasing with respect to the second variable}.

Motivated by the previous literature, in this paper we not only
study the strong convergence of EM schemes for a class of SDDEs,
which may be {\it highly nonlinear} with respect to the delay
variables, but also reveal the {\it convergence rate} of the
corresponding EM numerical methods. The rest of the
paper are organized as follows: under highly nonlinear growth
conditions with respect to the delay arguments, in Section 2 we
reveal the convergence rate of EM schemes for SDDEs driven by
Brownian motion is $\frac{1}{2}$, while in Section 3 we show that it
is best to use the mean-square convergence for the pure jump case,
and that the rate of mean-square convergence is close to
$\frac{1}{2}$.

\section{Convergence Rate for Brownian Motion Case}
For integer $n>0$, let
$(\mathbb{R}^n,\langle\cdot,\cdot\rangle,|\cdot|)$ be the Euclidean
space and $\|A\|:=\sqrt{\mbox{trace}(A^*A)}$ the Hilbert-Schmidt
norm for a matrix $A$, where $A^*$ is its transpose. Let $W(t)$ be
an $m$-dimensional  Brownian motion defined on some complete
probability space $(\Omega,\mathcal {F},\mathbb{P},\{\mathcal
{F}_t\}_{t\geq0})$. Throughout the paper, $C>0$ denotes a generic
constant whose values may change from lines to lines.

For fixed $T>0$, in this section we consider SDDE on $\mathbb{R}^n$
\begin{equation}\label{eq1}
\begin{split}
\d X(t)&=b(X(t),X(t-\tau))\d t+\sigma (X(t),X(t-\tau))\d W(t), \ \
t\in[0,T]
\end{split}
\end{equation}
with initial data $X(\theta)=\xi(\theta),\theta\in[-\tau,0]$.

To guarantee the existence and uniqueness of solution we introduce
the following conditions. Let
$V_i:\mathbb{R}^n\times\mathbb{R}^n\rightarrow\mathbb{R}_+$ such
that
\begin{equation}\label{eq19}
V_i(x,y)\leq K_i(1+|x|^{q_i}+|y|^{q_i}),\ \ \ i=1,2
\end{equation}
for some $K_i>0, q_i\geq1$ and arbitrary $x,y\in\mathbb{R}^n$. We
further assume that
\begin{enumerate}
\item[\textmd{(A1)}] $b:\mathbb{R}^n\times\mathbb{R}^n\rightarrow
\mathbb{R}^n$ and there exists $L_1>0$ such that
\begin{equation*}
|b(x_1,y_1)-b(x_2,y_2)|\leq L_1|x_1-x_2|+V_1(y_1,y_2)|y_1-y_2|
\end{equation*}
for $x_i,y_i\in \mathbb{R}^n,i=1,2$;
\item[\textmd{(A2)}] $\sigma:\mathbb{R}^n\times\mathbb{R}^n\rightarrow
\mathbb{R}^{n\times m}$ and there exists $L_2>0$ such that
\begin{equation*}
\|\sigma(x_1,y_1)-\sigma(x_2,y_2)\|\leq
L_2|x_1-x_2|+V_2(y_1,y_2)|y_1-y_2|
\end{equation*}
for $x_i,y_i\in \mathbb{R}^n,i=1,2$,
\end{enumerate}

We now introduce an EM  method for Eq. \eqref{eq1}. Without loss of
generality, we may assume that there exist sufficiently large
integers $N,M>0$ such that
\begin{equation}\label{eq27}
\triangle:=\frac{\tau}{N}=\frac{T}{M}\in(0,1).
\end{equation}
Define a continuous EM scheme associated with Eq. \eqref{eq1}
\begin{equation}\label{eq20}
\begin{split}
\d Y(t)&=b(\bar{Y}(t),\bar{Y}(t-\tau))\d t+\sigma
(\bar{Y}(t),\bar{Y}(t-\tau))\d W(t),
\end{split}
\end{equation}
where $\bar{Y}(t):=Y(k\triangle)$ for $
t\in[k\triangle,(k+1)\triangle), k=0,1,\cdots,M-1$, and
$\bar{Y}(\theta)=\xi(\theta),\theta\in[-\tau,0]$.

\begin{rem}
Clearly, if $b$ and $\sigma$ are globally Lipschitzian,  then $b$ and $\sigma$ are satisfied with  $(A1)$ and $(A2)$. On the other hand, we
remark that $b$ and $\sigma$ may be {\it highly nonlinear} with
respect to the delay variables. There are many such examples which
are covered by $(A1)-(A2)$. For example, for Eq. \eqref{eq35}  it is
trivial to see that $b(x,y)=ax+by^3$, $\sigma(x,y)=cy^2$, and
$(A1)-(A2)$ hold by choosing $V_1(x,y)=\frac{3|b|}{2}(x^2+y^2)$ and
$ V_2(x,y)=|c|(|x|+|y|)$. In fact, the examples, where the drift
coefficient and the diffusion coefficient are polynomial of degree
$d\geq1$ with regard to the delay variables, are  included in
our framework.
\end{rem}

\begin{lem}\label{lemma1.1}
{\rm Assume that $(A1)$ and $(A2)$ hold. Then, for any initial data
$\xi\in C^b_{\mathcal {F}_0}([-\tau,0];\mathbb{R}^n)$, Eq.
\eqref{eq1} admits a unique global strong solution $X(t),
t\in[0,T]$. Moreover, for any $p\geq2$ there exists $C>0$ such that
\begin{equation}\label{eq3}
\mathbb{E}\Big(\sup_{0\leq t\leq
T}|X(t)|^p\Big)\vee\mathbb{E}\Big(\sup_{0\leq t\leq
T}|Y(t)|^p\Big)\leq C,
\end{equation}
and
\begin{equation}\label{eq9}
\mathbb{E}|Y(t)-\bar{Y}(t)|^p\leq C\triangle^{\frac{p}{2}}.
\end{equation}
}
\end{lem}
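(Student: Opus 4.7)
The plan is to exploit the delay structure via the \emph{step method}: on $[0,\tau]$ the argument $X(t-\tau)$ reduces to the bounded $\mathcal{F}_0$-measurable datum $\xi(t-\tau)$, so the equation becomes a standard SDE in $x$ alone, and one then iterates on $[k\tau,(k+1)\tau]$ for $k=1,\ldots,\lceil T/\tau\rceil$. Conditions $(A1)$--$(A2)$ combined with \eqref{eq19}, applied at $x_2=y_2=0$, yield the growth bound
\[
|b(x,y)|+\|\sigma(x,y)\|\le C\bigl(1+|x|+|y|^{q_1+1}+|y|^{q_2+1}\bigr),
\]
together with a \emph{global} Lipschitz constant in the first variable. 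On each slab the delayed path has been constructed on the preceding slab and, inductively, carries arbitrary finite moments, so the SDE on the current slab has Lipschitz-in-$x$ coefficients with a random but $L^p$-integrable forcing; classical theory gives a unique strong solution, and gluing the slabs produces the unique strong solution of \eqref{eq1} on $[0,T]$.

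For the moment bound \eqref{eq3} on $X$, I would apply It\^o's formula to $|X(t)|^p$ on each slab, absorb the present-time drift and diffusion via Young's inequality, and dominate the delay contribution by $C(1+|X(t-\tau)|^{p(q_1+1)}+|X(t-\tau)|^{p(q_2+1)})$. Burkholder--Davis--Gundy on the martingale part followed by Gronwall then yields a bound on $\mathbb{E}\sup_{k\tau\le s\le (k+1)\tau}|X(s)|^p$ in terms of $\mathbb{E}\sup_{(k-1)\tau\le s\le k\tau}|X(s)|^{p(1+\max\{q_1,q_2\})}$. Since the claim is wanted for \emph{every} $p\ge2$, a finite bootstrap on $k$, starting from the essential bound on $\xi$ and applied first at a sufficiently high exponent, closes the induction. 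The same argument handles $Y(t)$ with $\bar Y$ in place of $X$ inside the coefficients, because $|\bar Y(s)|\le\sup_{0\le r\le s}|Y(r)|$ and so the delay term on slab $k+1$ is controlled by moments of $Y$ on slab $k$.

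For \eqref{eq9}, fix $t\in[k\Delta,(k+1)\Delta)$ and write
\[
Y(t)-\bar Y(t)=\int_{k\Delta}^t b(\bar Y(s),\bar Y(s-\tau))\,\d s+\int_{k\Delta}^t\sigma(\bar Y(s),\bar Y(s-\tau))\,\d W(s).
\]
H\"older on the drift produces the factor $\Delta^{p-1}\cdot\Delta=\Delta^p\le\Delta^{p/2}$ (since $\Delta\in(0,1)$ and $p\ge 2$), while BDG on the stochastic integral produces $\Delta^{p/2-1}\cdot\Delta=\Delta^{p/2}$; both pre-factors multiply $\sup_{0\le s\le T}\mathbb{E}\bigl[|b(\bar Y(s),\bar Y(s-\tau))|^p+\|\sigma(\bar Y(s),\bar Y(s-\tau))\|^p\bigr]$, which is finite and independent of $\Delta$ by the growth bound above together with \eqref{eq3} applied at the exponent $p(1+\max\{q_1,q_2\})$. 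The main obstacle is precisely this coupling: the high polynomial growth of $V_i$ in the delay variable forces control of very high moments of $Y$ in order to bound modest moments of the one-step increment, which is why \eqref{eq3} must be secured at every $p\ge2$ before \eqref{eq9} can be proved.
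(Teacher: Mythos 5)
Your proposal is correct and follows essentially the same route as the paper: derive polynomial growth bounds from $(A1)$--$(A2)$ and \eqref{eq19}, obtain uniform $p$th moment bounds for $X$ and $Y$ by a finite bootstrap over the delay intervals in which higher moments on $[0,k\tau]$ control the required moments on $[0,(k+1)\tau]$ (the paper writes this with an explicitly decreasing exponent sequence $p_i$, you describe it forward starting from a sufficiently high exponent), and then deduce \eqref{eq9} from H\"older, Burkholder--Davis--Gundy and \eqref{eq3} applied at the inflated exponent forced by the polynomial delay terms. The only minor deviations --- using It\^o's formula on $|X(t)|^p$ instead of the paper's direct $3^{p-1}$/H\"older/BDG estimate on the integral equation, and building existence slab-by-slab rather than via a local solution plus the a priori bound --- do not change the substance of the argument.
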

\begin{proof}
Note that Eq. \eqref{eq1} has a unique local solution due to the
fact that both $b$ and $\sigma$ are locally Lipschitzian. To verify
that Eq. \eqref{eq1} admits a unique global solution on time
interval $[0,T]$, it is sufficient to show that
\begin{equation}\label{eq14}
\mathbb{E}\Big(\sup_{0\leq t\leq T}|X(t)|^p\Big)\leq C,\ \ \ p\geq2.
\end{equation}
By a straightforward computation,  we can deduce from $(A1),(A2)$
and \eqref{eq19} that
\begin{equation}\label{eq16}
|b(x,y)|\leq C(1+|x|+|y|+|y|^{q_1+1}), \ \ \ x,y\in\mathbb{R}^n,
\end{equation}
and
\begin{equation}\label{eq17}
\|\sigma(x,y)\|\leq C(1+|x|+|y|+|y|^{q_2+1}), \ \ \
x,y\in\mathbb{R}^n.
\end{equation}
Set $\gamma_1:=q_1+1$ and $\gamma_2:=q_2+1$. To show \eqref{eq14},
by \eqref{eq16} and \eqref{eq17}, the H\"older inequality and the
Burkhold-Davis-Gundy inequality, we have that for any $p\geq2$ and
$t\in[0,T]$
\begin{equation*}
\begin{split}
\mathbb{E}\Big(\sup_{0\leq s\leq t}|X(s)|^p\Big)&\leq
3^{p-1}\Big\{|\xi(0)|^p+\mathbb{E}\Big(\sup_{0\leq s\leq t}\Big|\int_0^sb(X(r),X(r-\tau))\d r\Big|^p\Big)\\
&\quad+\mathbb{E}\Big(\sup_{0\leq s\leq t}\Big|\int_0^s\sigma(X(r),X(r-\tau))\d W(r)\Big|^p\Big)\Big\}\\
&\leq
C\Big\{1+\mathbb{E}\int_0^t(|b(X(s),X(s-\tau))|^p+\|\sigma(X(s),X(s-\tau))\|^p)\d
s\Big\}\\
&\leq C\Big\{1+\mathbb{E}\int_0^t|X(s)|^p\d
s+\mathbb{E}\int_0^t(|X(s-\tau)|^{p\gamma_1}+|X(s-\tau)|^{p\gamma_2})\d
s\Big\},
\end{split}
\end{equation*}
where we have also used the Young inequality in the last step.
 This, together with the
Gronwall inequality, yields that for $t\in[0,T]$ and $p\geq2$
\begin{equation}\label{eq13}
\mathbb{E}\Big(\sup_{0\leq s\leq t}|X(s)|^p\Big)\leq
C\Big\{1+\mathbb{E}\int_0^t(|X(s-\tau)|^{p\gamma_1}+|X(s-\tau)|^{p\gamma_2})\d
s\Big\}.
\end{equation}
The following argument is similar to that of \cite[Theorem
2.1]{wmc09}, however we give a detailed proof, which will also be used in the   proof of Theorem \ref{Th2} below. Let
$\beta:=\gamma_1\vee\gamma_2$, and
\begin{equation*}
p_i:=([T/\tau]+2-i)p\beta^{[T/\tau]+1-i}, \ \ \
i=1,2,\cdots,[T/\tau]+1,
\end{equation*}
where $[a]$ denotes the integer part of real number $a$. Thus, due
to $\beta\geq1$ and $p\geq2$, it is easy to see that $p_i\geq2$ such
that
\begin{equation*}
p_{i+1}\beta<p_i \mbox{ and } p_{[T/\tau]+1}=p, \ \ \
i=1,2,\cdots,[T/\tau].
\end{equation*}
By \eqref{eq13}, together with $\xi\in C^b_{\mathcal
{F}_0}([-\tau,0];\mathbb{R}^n)$,  we obtain that
\begin{equation*}
\mathbb{E}\Big(\sup_{0\leq s\leq \tau}|X(s)|^{p_1}\Big)\leq C,
\end{equation*}
which, combining \eqref{eq13} with the H\"older inequality, further
leads to
\begin{equation*}
\begin{split}
\mathbb{E}\Big(\sup_{0\leq s\leq 2\tau}|X(s)|^{p_2}\Big)&\leq
C\Big\{1+\mathbb{E}\int_0^{2\tau}(|X(s-\tau)|^{p_2\gamma_1}+|X(s-\tau)|^{p_2\gamma_2})\d
s\Big\}\\
&\leq
C\Big\{1+\int_0^{\tau}\Big((\mathbb{E}|X(s)|^{p_1})^{\frac{p_2\gamma_1}{p_1}}+(\mathbb{E}|X(s)|^{p_1})^{\frac{p_2\gamma_2}{p_1}}\Big)\d
s\Big\}\\
&\leq C.
\end{split}
\end{equation*}
Repeating the previous procedures gives \eqref{eq14} and
$\mathbb{E}\Big(\sup_{0\leq t\leq T}|Y(t)|^p\Big)\leq C$. Finally,
the statement \eqref{eq9} can also be obtained by taking into
account the H\"older inequality, the Burkhold-Davis-Gundy inequality
and  \eqref{eq3}.
\end{proof}

\begin{rem}
{\rm Lemma \ref{lemma1.1} gives a new result on existence and
uniqueness of solutions to SDDEs on finite-time interval, where the
coefficients may be polynomial of any degree $d\geq1$ with regard to
the delay variables. }
\end{rem}

We can now state our main result, which not only shows the strong
convergence of EM scheme associated with Eq. \eqref{eq1} but also
reveals its convergence rate,  although the drift coefficient and
the diffusion coefficient may be highly nonlinear with respect to
the delay arguments.
\begin{thm}\label{Th2}
{\rm Under $(A1)$ and $(A2)$, for any $p\geq2$ there exits $C>0$
such that
\begin{equation*}
\mathbb{E}\Big(\sup\limits_{0\leq s\leq T}|X(s)-Y(s)|^p\Big)\leq
C\triangle^{\frac{p}{2}},
\end{equation*}
that is, the rate of convergence  of EM scheme \eqref{eq20} is
$\frac{1}{2}$. }
\end{thm}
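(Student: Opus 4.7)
The plan is to mirror the inductive moment-bound scheme of Lemma \ref{lemma1.1} applied to the error $e(s):=X(s)-Y(s)$, organising the argument block-by-block over the delay partition $[k\tau,(k+1)\tau]$ while letting the moment exponent double at each iteration.

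First I would subtract \eqref{eq20} from \eqref{eq1}, take the $p$-th moment of $\sup_{s\leq t}|e(s)|$, and apply Jensen on the drift term and Burkholder--Davis--Gundy on the stochastic integral to bound $\E\sup_{s\leq t}|e(s)|^p$ by time integrals of $\E|b(X(s),X(s-\tau))-b(\bar Y(s),\bar Y(s-\tau))|^p$ and of the analogous $\sigma$-quantity. Next I would apply (A1) and (A2) together with the splitting $|X(s)-\bar Y(s)|\leq|e(s)|+|Y(s)-\bar Y(s)|$ (and similarly for the delayed pair); the Lipschitz pieces yield $\E|e(s)|^p$ plus a term controlled by \eqref{eq9}, while the genuinely nonlinear $V_i$-pieces are decoupled by Cauchy--Schwarz:
\begin{equation*}
\E\bigl[V_i(X(s-\tau),\bar Y(s-\tau))^p|X(s-\tau)-\bar Y(s-\tau)|^p\bigr] \leq \bigl(\E V_i^{2p}\bigr)^{1/2}\bigl(\E|X(s-\tau)-\bar Y(s-\tau)|^{2p}\bigr)^{1/2}.
\end{equation*}
By the polynomial growth \eqref{eq19} combined with the uniform moments in \eqref{eq3}, the factor $\E V_i^{2p}$ is bounded independently of $\Delta$.

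The inductive step proceeds block-wise. On $[0,\tau]$ the delay arguments coincide with $\xi$, so the $V_i$-pieces vanish identically and a direct Gronwall argument yields $\E\sup_{s\in[0,\tau]}|e(s)|^p\leq C\Delta^{p/2}$ for every $p\geq 2$. Assuming inductively the rate $\Delta^{p/2}$ at an appropriate exponent on $[0,k\tau]$, for $s\in[k\tau,(k+1)\tau]$ one has $s-\tau\in[(k-1)\tau,k\tau]$, so combining the inductive bound with \eqref{eq9} delivers $(\E|X(s-\tau)-\bar Y(s-\tau)|^{2p})^{1/2}\leq C\Delta^{p/2}$; plugging back and applying Gronwall on the new block extends the estimate to $[0,(k+1)\tau]$.

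Since each Cauchy--Schwarz step doubles the moment exponent demanded on the previous block, to obtain rate $\Delta^{p/2}$ in $L^p$ on $[0,T]$ I would initialise the induction at exponent $p\cdot 2^{\lceil T/\tau\rceil}$, exactly as in the ladder $p_i$ constructed in the proof of Lemma \ref{lemma1.1}; the moment bound \eqref{eq3}, valid at every order, supplies the starting estimate. The main obstacle I foresee is precisely this exponent bookkeeping, i.e.\ verifying that the iteratively doubled H\"older exponents close up after finitely many blocks, together with the care needed in the Cauchy--Schwarz decoupling to keep the $V_i$-factors cleanly separated from the error factors. Once that bookkeeping is organised the proof reduces to $\lceil T/\tau\rceil$ successive applications of Gronwall's inequality.
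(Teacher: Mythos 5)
Your proposal is correct, but it takes a genuinely different route from the paper. The paper follows the Gy\"ongy--R\'asonyi device: it smooths the absolute value by a Yamada--Watanabe-type function $\phi_{\delta\epsilon}$ with $\phi''_{\delta\epsilon}(x)\le 2/(x\ln\delta)$ supported on $[\epsilon/\delta,\epsilon]$, applies It\^o's formula to $V_{\delta\epsilon}(Z(t))=\phi_{\delta\epsilon}(|Z(t)|)$, pays a factor $\epsilon^{-p}$ to control the second-order term, and then optimises with $\epsilon=\triangle^{1/2}$; because that second-order term forces a Cauchy--Schwarz on $V_2^{2p}|Z|^{2p}$, the paper's exponent ladder must grow by a factor $4$ per delay block ($p_i=([T/\tau]+2-i)p4^{[T/\tau]+1-i}$). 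You bypass It\^o's formula altogether: writing the error as drift integral plus stochastic integral and applying Jensen and Burkholder--Davis--Gundy directly is legitimate here, since (A1)--(A2) are genuinely Lipschitz in the current state and only polynomially \emph{weighted} Lipschitz in the delayed state, so there is no H\"older-type irregularity to tame; you then need only one Cauchy--Schwarz per block, hence a factor-$2$ ladder and no $\epsilon$-optimisation. Both routes rest on the same two inputs from Lemma \ref{lemma1.1}, namely the all-order moment bounds \eqref{eq3} (which make $\mathbb{E}V_i^{2p}$ finite uniformly in $\triangle$) and the one-step estimate \eqref{eq9}, and both close the induction over the $[T/\tau]+1$ delay blocks by Gronwall, yielding the same rate $\triangle^{p/2}$. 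What the paper's smoothing buys is robustness: the same machinery survives a merely H\"older-continuous diffusion coefficient in the present variable, as in the cited Gy\"ongy--R\'asonyi setting, where your direct Lipschitz-plus-BDG estimate would no longer give order $\tfrac12$; under (A1)--(A2) as stated, however, your simpler argument is perfectly adequate. One small point of precision: on the first block the $V_i$-factors do not vanish, but they are multiplied by $|X(s-\tau)-\bar Y(s-\tau)|=0$, which is what actually makes the base case hold at every exponent.
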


\begin{proof}
The argument is motivated by that of \cite[Theorem 2.1]{gr11}. For
fixed $\delta>1$ and arbitrary $\epsilon\in(0,1)$, there exists a
continuous nonnegative function $\psi_{\delta\epsilon}(x),x\geq0$,
with support $[\epsilon/\delta,\epsilon]$, such that
\begin{equation*}
\int_{\epsilon/\delta}^{\epsilon}\psi_{\delta\epsilon}(x)\d x=1
\mbox{ and }\psi_{\delta\epsilon}(x)\leq\frac{2}{x\ln \delta},\ \
x>0.
\end{equation*}
Define
\begin{equation*}
\phi_{\delta\epsilon}(x):=\int_0^{x}\int_0^y\psi_{\delta\epsilon}(z)\d
z\d y,\ \ \ \ x>0.
\end{equation*}
Then $\phi_{\delta\epsilon}\in C^2(\mathbb{R}_+;\mathbb{R}_+)$
 possesses the following properties:
\begin{equation}\label{eq4}
x-\epsilon\leq\phi_{\delta\epsilon}(x)\leq x, \ \ \ \ \ \ x>0,
\end{equation}
and
\begin{equation}\label{eq5}
0\leq\phi'_{\delta\epsilon}(x)\leq1, \ \ \
\phi''_{\delta\epsilon}(x)\leq\frac{2}{x\ln\delta}{\bf1}_{[\epsilon/\delta,\epsilon]}(x),
\ \ \ x>0.
\end{equation}
Define
\begin{equation}\label{eq21}
V_{\delta\epsilon}(x):=\phi_{\delta\epsilon}(|x|), \ \ \
x\in\mathbb{R}^n.
\end{equation}
By the definition of $\phi_{\delta\epsilon}$, it is trivial to note
that $V_{\delta\epsilon}\in C^2(\mathbb{R}^n;\mathbb{R}_+)$. For any
$x\in\mathbb{R}^n$ set
\begin{equation*}
(V_{\delta\epsilon})_x(x):=\Big(\frac{\partial
V_{\delta\epsilon}(x)}{\partial x_1},\cdots,\frac{\partial
V_{\delta\epsilon}(x)}{\partial x_n}\Big) \mbox{ and }
(V_{\delta\epsilon})_{xx}(x):=\Big(\frac{\partial^2
V_{\delta\epsilon}(x)}{\partial x_i\partial x_j}\Big)_{n\times n}.
\end{equation*}
We then have
\begin{equation*}
\frac{\partial V_{\delta\epsilon}(x)}{\partial
x_i}=\phi'_{\delta\epsilon}(|x|)\frac{x_i}{|x|}  \mbox{ and
}\frac{\partial^2 V_{\delta\epsilon}(x)}{\partial x_i\partial
x_j}=\phi'_{\delta\epsilon}(|x|)(\delta_{ij}|x|^2-x_ix_j)|x|^{-3}+\phi''_{\delta\epsilon}(|x|)x_ix_j|x|^{-2},
\end{equation*}
for $x\in\mathbb{R}^n$,  $i=1,2,\cdots,n$, where $\delta_{ij}=1$
if $i=j$ or otherwise $0$, and 
\begin{equation}\label{eq6}
0\leq|(V_{\delta\epsilon})_x(x)|\leq1 \mbox{ and }
\|(V_{\delta\epsilon})_{xx}(x)\|\leq
2n\Big(1+\frac{1}{\ln\delta}\Big)\frac{1}{|x|}{\bf1}_{[\epsilon/\delta,\epsilon]}(|x|),\
x\in\mathbb{R}^n.
\end{equation}
For any $t\in[0,T]$, let
\begin{equation*}
Z(t):=X(t)-Y(t),\ \ \ \bar{Z}(t):=Y(t)-\bar{Y}(t)\ \mbox{ and }\
\tilde{Z}(t):=(X(t),\bar{Y}(t))\in\mathbb{R}^{2n}.
\end{equation*}
Application of  the It\^o formula yields that
\begin{equation*}
\begin{split}
V_{\delta\epsilon}(Z(t))
&=\int_0^t\langle(V_{\delta\epsilon})_x(Z(s)),b(X(s),X(s-\tau))-b(\bar{Y}(s),\bar{Y}(s-\tau))\rangle\d s\\
&\quad+\frac{1}{2}\int_0^t\mbox{trace}\{(\sigma(X(s),X(s-\tau))-\sigma(\bar{Y}(s),\bar{Y}(s-\tau)))^*(V_{\delta\epsilon})_{xx}(Z(s))\\
&\quad\times(\sigma(X(s),X(s-\tau))-\sigma(\bar{Y}(s),\bar{Y}(s-\tau)))\}\d
s\\
&\quad+\int_0^t\langle(V_{\delta\epsilon})_x(Z(s)),(\sigma(X(s),X(s-\tau))-\sigma(\bar{Y}(s),\bar{Y}(s-\tau)))\d
W(s)\rangle\\
&:=I_1(t)+I_2(t)+I_3(t).
\end{split}
\end{equation*}
 By \eqref{eq6}, $(A1)$ and the H\"older inequality,   we derive
 that
\begin{equation}\label{eq7}
\begin{split}
\mathbb{E}\Big(\sup\limits_{0\leq s\leq
t}|I_1(s)|^p\Big)&\leq\int_0^t\mathbb{E}|b(X(s),X(s-\tau))-b(\bar{Y}(s),\bar{Y}(s-\tau))|^p\d
s\\
&\leq
C\int_0^t\Big\{\mathbb{E}|Z(s)|^p+\Big(\mathbb{E}V_1^{2p}(\tilde{Z}(s-\tau))\Big)^{\frac{1}{2}}\Big(\mathbb{E}|Z(s-\tau)|^{2p}\Big)^{\frac{1}{2}}\\
&\quad+\mathbb{E}|\bar{Z}(s)|^p+\Big(\mathbb{E}V_1^{2p}(\tilde{Z}(s))\Big)^{\frac{1}{2}}\Big(\mathbb{E}|\bar{Z}(s)|^{2p}\Big)^{\frac{1}{2}}\Big\}\d
s,\ \ \ t\in[0,T],
\end{split}
\end{equation}
and due to $(A2)$ and \eqref{eq6} again that
\begin{equation}\label{eq8}
\begin{split}
\mathbb{E}\Big(\sup\limits_{0\leq s\leq t}|I_2(s)|^p\Big)&\leq
C\int_0^t\mathbb{E}\{\|(V_{\delta\epsilon})_{xx}(Z(s))\|\|\sigma(X(s),X(s-\tau))-\sigma(\bar{Y}(s),\bar{Y}(s-\tau))\|^2\}^p\d
s\\
&\leq
C\mathbb{E}\int_0^t\frac{1}{|Z(s)|^p}\{|Z(s)|^{2p}+V_2^{2p}(\tilde{Z}(s-\tau))|Z(s-\tau)|^{2p}\\
&\quad+|\bar{Z}(s)|^{2p}+V_2^{2p}(\tilde{Z}(s-\tau))|\bar{Z}(s-\tau)|^{2p}\}{\bf1}_{[\epsilon/\delta,\epsilon]}(|Z(s)|)\d
s\\
&\leq
C\int_0^t\Big\{\mathbb{E}|Z(s)|^p+\frac{1}{\epsilon^p}\Big(\mathbb{E}V_2^{4p}(\tilde{Z}(s-\tau))\Big)^{\frac{1}{2}}\Big(\mathbb{E}|Z(s-\tau)|^{4p}\Big)^{\frac{1}{2}}\\
&\quad+\frac{1}{\epsilon^p}\mathbb{E}|\bar{Z}(s)|^{2p}
+\frac{1}{\epsilon^p}\Big(\mathbb{E}V_2^{4p}(\tilde{Z}(s))\Big)^{\frac{1}{2}}\Big(\mathbb{E}|\bar{Z}(s)|^{4p}\Big)^{\frac{1}{2}}\Big\}\d
s, \ \ \ t\in[0,T].
\end{split}
\end{equation}
By virtue of the Burkhold-Davis-Gundy inequality, the H\"older
inequality and \eqref{eq6}, for any $p\geq2$ and $t\in[0,T]$
\begin{equation}\label{eq10}
\begin{split}
\mathbb{E}\Big(\sup\limits_{0\leq s\leq t}|I_3(s)|^p\Big)&\leq
C\mathbb{E}\Big(\int_0^t\|\sigma(X(s),X(s-\tau))-\sigma(\bar{Y}(s),\bar{Y}(s-\tau))\|^2\d
s\Big)^{\frac{p}{2}}\\
&\leq
C\mathbb{E}\int_0^t\|\sigma(X(s),X(s-\tau))-\sigma(\bar{Y}(s),\bar{Y}(s-\tau))\|^{p}\d
s\\
 &\leq C\int_0^t\Big\{\mathbb{E}|Z(s)|^{p}\d s+\Big(\mathbb{E}V_2^{2p}(\tilde{Z}(s-\tau))\Big)^{\frac{1}{2}}\Big(\mathbb{E}|Z(s-\tau)|^{2p}\Big)^{\frac{1}{2}}\\
&\quad+\mathbb{E}|\bar{Z}(s)|^{p}+\Big(\mathbb{E}V_2^{2p}(\tilde{Z}(s))\Big)^{\frac{1}{2}}\Big(\mathbb{E}|\bar{Z}(s)|^{2p}\Big)^{\frac{1}{2}}\Big\}\d
s.
\end{split}
\end{equation}
Furthermore, observe from \eqref{eq19} and \eqref{eq3} that
\begin{equation*}
\mathbb{E}V_1^{2p}(\tilde{Z}(s-\tau))+\mathbb{E}V_2^{4p}(\tilde{Z}(s-\tau))\leq
C
\end{equation*}
and by \eqref{eq9} that $\mathbb{E}|\bar{Z}(t)|^p\leq
C\triangle^{\frac{p}{2}}. $ Then, combining \eqref{eq7}, \eqref{eq8}
with \eqref{eq10}, we thus obtain from \eqref{eq4} that, for any
$t\in[0,T]$ and any $p\geq2$,
\begin{equation*}
\begin{split}
\mathbb{E}\Big(\sup\limits_{0\leq s\leq t}|Z(s)|^p\Big)&\leq
2^{p-1}\Big\{\epsilon^p+\mathbb{E}\Big(\sup\limits_{0\leq s\leq
t}V^p_{\delta\epsilon}(Z(s))\Big)\Big\}\\
&\leq
C\Big\{\epsilon^p+\triangle^{\frac{p}{2}}+\frac{1}{\epsilon^p}\triangle^p+\frac{1}{\epsilon^p}\triangle^{p}\\
&\quad+\int_0^t\mathbb{E}|Z(s)|^p\d
s+\int_0^t\Big(\mathbb{E}|Z(s-\tau)|^{2p}\Big)^{\frac{1}{2}}\d
s+\frac{1}{\epsilon^p}\int_0^t\Big(\mathbb{E}|Z(s-\tau)|^{4p}\Big)^{\frac{1}{2}}\d
s\Big\}.
\end{split}
\end{equation*}
This, together with the Gronwall inequality,   implies  
\begin{equation}\label{eq11}
\begin{split}
\mathbb{E}\Big(\sup\limits_{0\leq s\leq t}|Z(s)|^p\Big)&\leq
C\Big\{\epsilon^p+\triangle^{\frac{p}{2}}+\frac{1}{\epsilon^p}\triangle^p+\int_0^t\Big(\mathbb{E}|Z(s-\tau)|^{2p}\Big)^{\frac{1}{2}}\d s\\
&\quad+\frac{1}{\epsilon^p}\int_0^t\Big(\mathbb{E}|Z(s-\tau)|^{4p}\Big)^{\frac{1}{2}}\d
s\Big\}.
\end{split}
\end{equation}
For any $p\geq2$, let
\begin{equation*}
p_i:=([T/\tau]+2-i)p4^{[T/\tau]+1-i}, \ \ \ i=1,2,\cdots,[T/\tau]+1.
\end{equation*}
 It is easy to see that
\begin{equation}\label{eq12}
4p_{i+1}<p_i \ \mbox{ and } \ p_{[T/\tau]+1}=p, \ \ \
i=1,2,\cdots,[T/\tau].
\end{equation}
Noting that $Z(s-\tau)=0$ for $s\in[0,\tau]$ and taking
$\epsilon=\triangle^{\frac{1}{2}}$ in  \eqref{eq11}, we obtain  that
\begin{equation*}
\mathbb{E}\Big(\sup\limits_{0\leq s\leq \tau}|Z(s)|^{p_1}\Big)\leq
C\triangle^{\frac{p1}{2}}.
\end{equation*}
This, together with  \eqref{eq12} and the H\"older inequality,
further gives that
\begin{equation*}
\begin{split}
\mathbb{E}\Big(\sup\limits_{0\leq s\leq 2\tau}|Z(s)|^{p_2}\Big)&\leq
C\Big\{\triangle^{\frac{p_2}{2}}+\int_0^{2\tau}\Big(\mathbb{E}|Z(s-\tau)|^{p_1}\Big)^{\frac{p_2}{p_1}}\d s\\
&\quad+\triangle^{-\frac{p_2}{2}}\int_0^{2\tau}\Big(\mathbb{E}|Z(s-\tau)|^{p_1}\Big)^{\frac{2p_2}{p_1}}\d
s\Big\}\\
&\leq C\triangle^{\frac{p2}{2}}
\end{split}
\end{equation*}
by taking $\epsilon=\triangle^{\frac{1}{2}}$ in \eqref{eq11}. The
desired assertion then follows by repeating the previous procedures.
\end{proof}

\begin{rem}
{\rm The strong convergence of EM scheme for SDDEs is generally
investigated under local Lipschitz condition and bounded moments of
analytic solutions and numerical solutions, or local Lipschitz
condition and linear growth condition, e.g., \cite{ms03}. In
this section, for a class of SDDEs, which may be {\it highly
nonlinear} with respect to the delay variables, we show the strong
convergence of EM scheme under rather general conditions.  To the best of our knowledge, there are relatively  few results in the existing literature.}
\end{rem}

\begin{rem}
{\rm There are only limited results on convergence order of EM
scheme for SDEs or SDDEs under weaker condition than global
Lipschitz and linear growth condition,  For example, under a
H\"older continuous condition, \cite{gr11} reveals the convergence
order of EM scheme for a class of SDEs, and, with local Lipschitz
constants satisfying a logarithm growth condition, \cite{ym08} and
\cite{bbmy11,jwy09} discuss the convergence rate of EM approximate
methods for SDEs and stochastic functional differential equations
with jumps respectively, where
 {\it linear growth condition} is imposed in \cite{bbmy11,jwy09}.
 While, in this section, under very general conditions we reveal the convergence order of EM scheme for a class of
SDDEs although which are {\it highly nonlinear} with respect to
delay arguments. }
\end{rem}

\section{Convergence Rate for  Pure Jump Case}
In the last section we discuss the strong convergence of EM scheme
for a class of SDDEs, and reveal the convergence rate is
$\frac{1}{2}$ although both the drift coefficient and the diffusion
coefficient may be highly nonlinear with respect to the delay
variables. In this section we turn to the counterpart for  SDDEs with
jumps. We further need to introduce some notation. Let  $\mathcal
{B}(\mathbb{R})$ be the Borel $\sigma$-algebra on $\mathbb{R}$, and
$\lambda(dx)$ a $\sigma$-finite measure defined on $\mathcal
{B}(\mathbb{R})$. Let $p=(p(t)),t\in D_p$, be a stationary $\mathcal
{F}_t$-Poisson point process on $\mathbb{R}$ with characteristic
measure $\lambda(\cdot)$. Denote by $N(dt,du)$ the Poisson counting
measure associated with $p$, i.e., $N(t,U)=\sum_{s\in D_p, s\leq
t}I_{U}(p(s))$ for $U\in\mathcal {B}(\mathbb{R})$. Let
$\tilde{N}(dt,du):=N(dt,du)-dt\lambda(du)$ be the compensated
Poisson measure associated with $N(dt,du)$. In what follows, we
further assume that $\int_U|u|^p\lambda(\d u)<\infty$ for any
$p\geq2$.

In this section we consider SDDE with jumps on $\mathbb{R}^n$
\begin{equation}\label{eq24}
\d X(t)=b(X(t),X(t-\tau))\d t+\int_Uh(X(t),X(t-\tau),u)\tilde{N}(\d
t,\d u), \ \ \ t\in[0,T]
\end{equation}
with initial data $X(\theta)=\xi(\theta),\theta\in[-\tau,0]$, where
$\xi\in\mathscr{C}$. We assume that
\begin{enumerate}
\item[\textmd{(A3)}]
$b:\mathbb{R}^n\times\mathbb{R}^n\rightarrow\mathbb{R}^n$ satisfies the assumption
$(A1)$;
\item[\textmd{(A4)}] $h:\mathbb{R}^n\times\mathbb{R}^n\times
U\rightarrow\mathbb{R}^n$ and there exists $L_3>0$ such that
\begin{equation*}
|h(x_1,y_1,u)-h(x_2,y_2,u)|\leq
(L_3|x_1-x_2|+V_3(y_1,y_2)|y_1-y_2|)|u|
\end{equation*}
for $x_i,y_i\in\mathbb{R}^n,i=1,2$, and $u\in U$, where
$V_3:\mathbb{R}^n\times\mathbb{R}^n\rightarrow\mathbb{R}_+$ such
that
\begin{equation}\label{eq33}
V_3(x,y)\leq K_3(1+|x|^{q_3}+|y|^{q_3})
\end{equation}
for some $K_3>0, q_3\geq1$ and arbitrary $x,y\in\mathbb{R}^n$.
\end{enumerate}

\begin{rem}
{\rm The jump coefficient may be also {\it highly nonlinear} with
respect to the delay arguments, e.g., for $x,y\in\mathbb{R},u\in U$
and $q>1$, $h(x,y,u)=y^qu$ satisfies $(A4)$. }
\end{rem}

Fix $T>0$ and let the stepsize $\triangle$ be defined by
\eqref{eq27}. The EM scheme associated with Eq. \eqref{eq24} is
defined as follows:
\begin{equation}\label{eq22}
\d Y(t)=b(\tilde{Y}(t),\tilde{Y}(t-\tau))\d
t+\int_Uh(\tilde{Y}(t),\tilde{Y}(t-\tau),u)\tilde{N}(\d t,\d u),
\end{equation}
where $\bar{Y}(t):=Y(k\triangle)$ for $
t\in[k\triangle,(k+1)\triangle), k=0,1,\cdots,M-1$, and
$\bar{Y}(\theta)=\xi(\theta),\theta\in[-\tau,0]$.

To reveal  the convergence order of EM scheme \eqref{eq22}, we need two 
auxiliary lemmas,  where the first one is
Bichteler-Jacod inequality for Poisson integrals, e.g., \cite[Lemma
3.1]{mpr10}.

\begin{lem}\label{Kunita inequality}
{\rm Let $\Phi:\mathbb{R}_+\times U\rightarrow\mathbb{R}^n$ and
assume that
\begin{equation*}
\int_0^t\int_U\mathbb{E}|\Phi(s,u)|^p\lambda(\d u)\d s<\infty, \ \ \
t\geq0,\ \ p\geq2.
\end{equation*}
Then there exists $D(p)>0$ such that
\begin{equation*}
\begin{split}
\mathbb{E}\Big(\sup\limits_{0\leq s\leq
t}\Big|\int_0^s\int_U\Phi(r,u)\tilde{N}(\d u,\d s)\Big|^p\Big)&\leq
D(p)\Big\{\mathbb{E}\Big(\int_0^t\int_U|\Phi(s,u)|^2\lambda(\d u)\d
s\Big)^{\frac{p}{2}}\\
&\quad+\mathbb{E}\int_0^t\int_U|\Phi(s,u)|^p\lambda(\d u)\d s\Big\}.
\end{split}
\end{equation*}
}
\end{lem}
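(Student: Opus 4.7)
The plan is to reduce the inequality to a Burkholder-Davis-Gundy (BDG) estimate for the purely discontinuous martingale $M_t:=\int_0^t\int_U \Phi(r,u)\tilde{N}(du,dr)$ and then to control its quadratic variation $[M]_t = \int_0^t\int_U |\Phi(s,u)|^2 N(du,ds)$. Write $[M]_t = M'_t + A_t$, where $A_t := \int_0^t\int_U |\Phi(s,u)|^2\lambda(du)\,ds$ is the deterministic compensator and $M'_t := \int_0^t\int_U |\Phi(s,u)|^2\tilde{N}(du,ds)$ is itself a purely discontinuous martingale. BDG yields $\mathbb{E}\sup_{s\leq t}|M_s|^p\leq C_p\,\mathbb{E}[M]_t^{p/2}$, and the elementary inequality $(a+b)^{p/2}\leq 2^{p/2-1}(a^{p/2}+b^{p/2})$ for $a,b\geq 0$ reduces the problem to bounding $\mathbb{E}|M'_t|^{p/2}$ and $\mathbb{E}A_t^{p/2}$; the latter is precisely the first summand on the desired right-hand side.

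The term $\mathbb{E}|M'_t|^{p/2}$ I would treat by a dyadic induction on $p$. When $p\in[2,4]$, so $p/2\in[1,2]$, the von Bahr-Esseen type bound for purely discontinuous mean-zero martingales gives $\mathbb{E}|M'_t|^{p/2}\leq C\,\mathbb{E}\sum_{s\leq t}|\Delta M'_s|^{p/2} = C\,\mathbb{E}\int_0^t\int_U |\Phi|^p\lambda(du)\,ds$, matching the second summand of the target bound. For $p\geq 4$ one reapplies the inequality being proved to $M'_t$, now with exponent $p/2\geq 2$ and integrand $|\Phi|^2$, then uses H\"older against $\lambda(du)\,ds$ to interpolate the resulting $|\Phi|^4$ term between $|\Phi|^2$ and $|\Phi|^p$, and Young's inequality to absorb the cross piece into the two canonical integrals. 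Each step doubles the admissible exponent, so finitely many iterations cover every $p\geq 2$.

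An alternative I would keep in reserve is the direct It\^o-formula approach: applying the jump It\^o formula to $x\mapsto|x|^p$ and invoking the pointwise bound $\bigl||a+b|^p - |a|^p - p|a|^{p-2}\langle a,b\rangle\bigr|\leq C_p(|a|^{p-2}|b|^2 + |b|^p)$ with $a=M_{s-}$, $b=\Phi(s,u)$ decomposes $|M_t|^p$ into a martingale plus a compensator controlled by $C_p\int_0^t\int_U(|M_{s-}|^{p-2}|\Phi|^2 + |\Phi|^p)\lambda(du)\,ds$. Doob on the martingale part, Young on the mixed $|M|^{p-2}|\Phi|^2$ term, and Gronwall then close the estimate in a single pass, without induction on $p$.

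The main obstacle is the bookkeeping in the iteration: one must track that the constants $D(p)$ produced by successive doublings remain finite, and verify that the H\"older interpolation collapses the intermediate $L^{p/2}$ norm of $|\Phi|^2$ cleanly into the two canonical quantities rather than leaving an irreducible residual. The base case is the It\^o isometry $\mathbb{E}|M_t|^2 = \mathbb{E}\int_0^t\int_U|\Phi|^2\lambda(du)\,ds$ together with Doob's maximal inequality, which secures $p=2$ and makes the recursion well-founded.
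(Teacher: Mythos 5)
The paper does not prove this lemma at all: it is quoted as a known result, the Bichteler--Jacod (Kunita-type) maximal inequality for compensated Poisson integrals, with a citation to Lemma 3.1 of Marinelli, Pr\'ev\^ot and R\"ockner. Your argument is essentially the standard proof lying behind that citation, and its main line is sound: BDG for the purely discontinuous martingale $M$ reduces everything to $\mathbb{E}[M]_t^{p/2}$; splitting $[M]_t$ into the compensator $A_t$ and the compensated part $M'_t$ produces the first canonical term; for $p\in[2,4]$ the exponent $p/2\in[1,2]$ case follows from BDG at exponent $p/2$ together with subadditivity of $x\mapsto x^{p/4}$ (your von Bahr--Esseen step), giving exactly $\mathbb{E}\int_0^t\int_U|\Phi(s,u)|^p\lambda(\d u)\d s$ since the jumps of $M'$ are $|\Phi|^2$; and for $p>4$ the self-application to $M'$ with integrand $|\Phi|^2$ closes after the H\"older interpolation $\int|\Phi|^4\le\big(\int|\Phi|^2\big)^{(p-4)/(p-2)}\big(\int|\Phi|^p\big)^{2/(p-2)}$ and Young with conjugate exponents $2(p-2)/(p-4)$ and $2(p-2)/p$, so the recursion terminates after finitely many halvings with finite constants $D(p)$. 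Two caveats you should make explicit: first, localize by stopping times (or assume the intermediate moments finite, which interpolation between the exponents $2$ and $p$ then justifies) so every quantity appearing in the recursion is finite; second, in your reserve It\^o-formula route the pointwise Young step on $|M_{s-}|^{p-2}|\Phi|^2$ is problematic when $\lambda(U)=\infty$, since it produces either a factor $\lambda(U)$ or the uncontrolled quantity $\mathbb{E}\int_0^t\big(\int_U|\Phi(s,u)|^2\lambda(\d u)\big)^{p/2}\d s$; the correct handling is to bound $|M_{s-}|^{p-2}$ by $\sup_{r\le t}|M_r|^{p-2}$, apply H\"older in expectation with exponents $p/(p-2)$ and $p/2$, and absorb the resulting $\varepsilon\,\mathbb{E}\sup_{r\le t}|M_r|^p$ term, which is what actually yields the $\mathbb{E}\big(\int_0^t\int_U|\Phi(s,u)|^2\lambda(\d u)\d s\big)^{p/2}$ contribution. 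With these repairs both of your routes are valid proofs of the stated inequality.
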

Using the Lemma above and the similar argument of Lemma \ref{lemma1.1}, we have
\begin{lem}
{\rm Let $(A3)$ and $(A4)$ hold. Then Eq.\eqref{eq24} has a unique
global solution $(X(t))_{t\in[0,T]}$. Moreover, for any $p\geq2$
there exists $C>0$ such that
\begin{equation}\label{eq25}
\mathbb{E}\Big(\sup_{0\leq t\leq
T}|X(t)|^p\Big)\vee\mathbb{E}\Big(\sup_{0\leq t\leq
T}|Y(t)|^p\Big)\leq C,
\end{equation}
and
\begin{equation}\label{eq26}
\mathbb{E}|Y(t)-\bar{Y}(t)|^p\leq C\triangle.
\end{equation}}
\end{lem}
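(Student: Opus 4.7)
The plan is to adapt the argument of Lemma \ref{lemma1.1} to the jump setting, replacing the Burkholder--Davis--Gundy inequality with the Bichteler--Jacod estimate of Lemma \ref{Kunita inequality}. First, I would combine (A3), (A4) and the polynomial bound \eqref{eq33} on $V_3$ to obtain growth estimates analogous to \eqref{eq16}--\eqref{eq17}:
\begin{equation*}
|b(x,y)|\leq C(1+|x|+|y|+|y|^{q_1+1}), \qquad |h(x,y,u)|\leq C(1+|x|+|y|+|y|^{q_3+1})|u|.
\end{equation*}
Local existence and uniqueness of \eqref{eq24} follow from the local Lipschitz continuity inherited from (A3)--(A4), so to upgrade to a global solution on $[0,T]$ it suffices to establish the a priori moment bound in \eqref{eq25}.

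For this bound, I would apply the H\"older inequality to the drift term and Lemma \ref{Kunita inequality} to the compensated Poisson integral, using the standing assumption $\int_U|u|^p\lambda(\d u)<\infty$ to absorb the $|u|^p$ factor. This produces an estimate of the same shape as \eqref{eq13}:
\begin{equation*}
\mathbb{E}\Big(\sup_{0\leq s\leq t}|X(s)|^p\Big)\leq C\Big\{1+\int_0^t\mathbb{E}\big(|X(s-\tau)|^{p\gamma_1}+|X(s-\tau)|^{p\gamma_3}\big)\,\d s\Big\},
\end{equation*}
with $\gamma_1:=q_1+1$ and $\gamma_3:=q_3+1$. Setting $\beta:=\gamma_1\vee\gamma_3$ and using the ladder $p_i:=([T/\tau]+2-i)p\beta^{[T/\tau]+1-i}$, the step-by-step bootstrap on $[0,\tau],[\tau,2\tau],\ldots$ from the proof of Lemma \ref{lemma1.1} would deliver the moment bound for $X$. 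The identical argument applied to \eqref{eq22} yields the corresponding bound for $Y$.

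For \eqref{eq26}, on the elementary interval $[k\triangle,(k+1)\triangle)$ one has $\bar{Y}(t)=Y(k\triangle)$ and
\begin{equation*}
Y(t)-\bar{Y}(t)=\int_{k\triangle}^{t}b(\bar{Y}(s),\bar{Y}(s-\tau))\,\d s+\int_{k\triangle}^{t}\int_U h(\bar{Y}(s),\bar{Y}(s-\tau),u)\tilde{N}(\d s,\d u).
\end{equation*}
The H\"older inequality bounds the drift contribution by $C\triangle^p$, while Lemma \ref{Kunita inequality} together with the moment bound just obtained bounds the jump contribution by $C(\triangle^{p/2}+\triangle)$. Since $\triangle\in(0,1)$ and $p\geq 2$, the smallest exponent dominates, giving the desired bound $C\triangle$.

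The main subtlety lies precisely in this last estimate: in contrast to the Brownian case \eqref{eq9}, where the Burkholder--Davis--Gundy inequality uniformly yields $\triangle^{p/2}$, the Bichteler--Jacod inequality produces both a $\triangle^{p/2}$ piece (from the quadratic variation term) and a $\triangle$ piece (from the pure $L^p$ term). For $p>2$ the latter is binding, which is why \eqref{eq26} only states order $\triangle$ rather than $\triangle^{p/2}$. This structural feature of pure-jump noise is exactly what will force the convergence rate in Section 3 to fall short of $\tfrac{1}{2}$ and to be expressed in mean-square form.
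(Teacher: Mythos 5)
Your proposal is correct and takes essentially the same route as the paper, which itself only says ``using Lemma \ref{Kunita inequality} and the similar argument of Lemma \ref{lemma1.1}'' without further detail: growth bounds from (A3)--(A4), the Bichteler--Jacod inequality in place of Burkholder--Davis--Gundy, the same $\beta$-ladder bootstrap over $[0,\tau],[\tau,2\tau],\dots$ for \eqref{eq25}, and the one-step estimate for \eqref{eq26}. In particular, your identification of the pure $L^p$ term in the Bichteler--Jacod bound as the source of the order $\triangle$ (rather than $\triangle^{p/2}$) in \eqref{eq26} matches exactly the explanation the paper gives in the remark following the lemma.
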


\begin{rem}
{\rm We remark that  for $p\geq2$  all $p$th-moments of
$Y(t)-\bar{Y}(t)$ are bounded by $\triangle$ up to a constant, which
is completely different from the Brownian motion case \eqref{eq9}.
This is due to the fact that all moments of  the increment
$\tilde{N}((0,(i+1)\triangle],\d u)-\tilde{N}((0,i\triangle],\d u)$
have  order $O(\triangle)$ for $\triangle\in(0,1)$.}
\end{rem}
We now state our main result in this section.
\begin{thm}\label{Th3}
{\rm Let $(A3)$ and $(A4)$ hold. For any $p\geq2$ and arbitrary
$\theta,\alpha\in(0,1)$, there exists $C>0$, independent of
$\triangle$, such that
\begin{equation*}
\mathbb{E}\Big(\sup\limits_{0\leq s\leq T}|X(s)-Y(s)|^p\Big)\leq
C\triangle^{\frac{1}{(1+\theta)^{[T/\tau](1+\alpha)}}}.
\end{equation*}}
\end{thm}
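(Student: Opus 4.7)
The plan is to mimic the architecture of the proof of Theorem~\ref{Th2}, but replace the It\^o formula for the Brownian case with the one for jump processes and use the Bichteler--Jacod inequality of Lemma~\ref{Kunita inequality} in place of Burkholder--Davis--Gundy. Fix the same auxiliary smooth approximation $V_{\delta\epsilon}(x)=\phi_{\delta\epsilon}(|x|)$ of $|x|$ with properties \eqref{eq4}--\eqref{eq6}. Set
\[
Z(t):=X(t)-Y(t),\quad \bar Z(t):=Y(t)-\bar Y(t),\quad \tilde Z(t):=(X(t),\bar Y(t)),
\]
and let $\Delta b(s):=b(X(s),X(s-\tau))-b(\bar Y(s),\bar Y(s-\tau))$, $\Delta h(s,u):=h(X(s),X(s-\tau),u)-h(\bar Y(s),\bar Y(s-\tau),u)$. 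Itô's formula for semimartingales with jumps applied to $V_{\delta\epsilon}(Z(t))$ splits into three pieces:
\[
V_{\delta\epsilon}(Z(t))=J_1(t)+J_2(t)+J_3(t),
\]
where $J_1$ is the $\d s$ integral against $\langle (V_{\delta\epsilon})_x(Z),\Delta b\rangle$, $J_2$ is the compensator integral $\int_0^t\!\int_U[V_{\delta\epsilon}(Z(s-)+\Delta h)-V_{\delta\epsilon}(Z(s-))-\langle (V_{\delta\epsilon})_x(Z(s-)),\Delta h\rangle]\,\lambda(\d u)\d s$, and $J_3$ is the $\tilde N$-martingale part.

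Next, I would bound the three pieces. For $J_1$, the estimate is identical to the Brownian case using $(A3)$ (which coincides with $(A1)$) and $|(V_{\delta\epsilon})_x|\le 1$. For $J_3$, apply Lemma~\ref{Kunita inequality} together with the one-Lipschitz bound $|V_{\delta\epsilon}(z+\Delta h)-V_{\delta\epsilon}(z)|\le|\Delta h|$, then use $(A4)$ and $\int_U|u|^p\lambda(\d u)<\infty$; this produces terms of the form $\int_0^t\mathbb E|Z(s)|^p\d s$, $\int_0^t(\mathbb E|Z(s-\tau)|^{2p})^{1/2}\d s$, and $\mathbb E|\bar Z|^p$-type remainders. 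For $J_2$, Taylor-expand to second order and use the pointwise bound $\|(V_{\delta\epsilon})_{xx}(z)\|\le \frac{C}{|z|}\mathbf 1_{[\epsilon/\delta,\epsilon]}(|z|)\le \frac{C\delta}{\epsilon}$ on its support; this, together with $(A4)$ and \eqref{eq33}, yields a bound governed by $\frac{1}{\epsilon}\int_0^t\!\int_U\mathbb E|\Delta h|^2\lambda(\d u)\d s$, whose dangerous delay-dependent piece is of order $\frac{1}{\epsilon}(\mathbb E V_3^{2p})^{1/2}(\mathbb E|Z(s-\tau)|^{2p})^{1/2}$ plus $\frac{1}{\epsilon}\mathbb E|\bar Z|^{2p}$ plus linear terms in $\mathbb E|Z|^p$. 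The uniform moment bound \eqref{eq25} makes the $V_3$-factors harmless, and the discretization estimate \eqref{eq26} contributes $\triangle$ (rather than $\triangle^{p/2}$) whenever $|\bar Z|$ appears.

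Combining $J_1,J_2,J_3$ via \eqref{eq4} and the Gronwall inequality, I obtain for $t\in[0,T]$
\[
\mathbb E\Big(\sup_{0\le s\le t}|Z(s)|^p\Big)\le C\Big\{\epsilon^p+\triangle+\frac{\triangle}{\epsilon^p}+\int_0^t(\mathbb E|Z(s-\tau)|^{2p})^{\frac12}\d s+\frac{1}{\epsilon^p}\int_0^t(\mathbb E|Z(s-\tau)|^{2p})^{\frac12}\d s\Big\}.
\]
On $[0,\tau]$ the delay terms vanish, so choosing $\epsilon=\triangle^{1/(2p(1+\theta))}$ and applying Young's inequality with exponents $1+\theta$ and $(1+\theta)/\theta$ gives a first-step rate of $\triangle^{1/(1+\theta)}$ for $\mathbb E\sup|Z|^{p_1}$ with an appropriately enlarged $p_1$. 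Iterating on $[k\tau,(k+1)\tau]$ for $k=1,\ldots,[T/\tau]$ with a sequence of moment-indices $p_i$ analogous to the one in Theorem~\ref{Th2} (the parameter $\alpha$ accommodating the geometric growth of $p_i$ against the $(\mathbb E|Z(s-\tau)|^{2p_{i+1}})^{1/2}$ Hölder exponents), at each step the previous rate is composed with a further Young splitting, producing the iterated exponent $(1+\theta)^{[T/\tau](1+\alpha)}$ in the denominator.

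The main technical obstacle is the careful bookkeeping in the bootstrap: unlike the Brownian case, the fact that $\mathbb E|\bar Z|^p$ is of order $\triangle$ rather than $\triangle^{p/2}$ means that the optimal $\epsilon$ depends on $p$, and any naïve iteration of the inequality loses a full factor in the rate. The device is to introduce the auxiliary exponent $1+\theta$ in Young's inequality so that only a small amount of rate is sacrificed each time a delay-term $(\mathbb E|Z(s-\tau)|^{2p})^{1/2}$ is absorbed, while the parameter $\alpha$ controls the loss incurred from the finite Hölder-exponent sequence $p_i/p_{i+1}$. After $[T/\tau]$ compositions one arrives at $\triangle^{1/(1+\theta)^{[T/\tau](1+\alpha)}}$, which for $\theta,\alpha$ small recovers the advertised order close to $1$, i.e.\ mean-square order close to $\tfrac12$.
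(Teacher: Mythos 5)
Your overall architecture (It\^o's formula for $V_{\delta\epsilon}(Z)$, Lemma \ref{Kunita inequality} for the martingale part, a bootstrap over the delay intervals) is the same as the paper's, but two specific choices in your plan break the rate you are supposed to prove. First, your treatment of the compensator term $J_2$ by a second-order Taylor expansion together with the Hessian bound $\|(V_{\delta\epsilon})_{xx}\|\leq C\delta/\epsilon$ injects a factor $\epsilon^{-p}$ multiplying quantities which, in the pure jump case, are only of size $\triangle$ (by \eqref{eq26} one has $\mathbb{E}|\bar Z(t)|^{q}\leq C\triangle$ for every $q\geq2$, not $\triangle^{q/2}$ as in the Brownian case). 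Your master inequality therefore contains $\epsilon^{p}+\triangle+\triangle\,\epsilon^{-p}$, and on the first interval $[0,\tau]$ no choice of $\epsilon$ can do better than $\triangle^{1/2}$; in particular your claim that $\epsilon=\triangle^{1/(2p(1+\theta))}$ yields a first-step rate $\triangle^{1/(1+\theta)}$ does not follow from your own displayed bound. The paper avoids the $\epsilon^{-1}$ penalty entirely: it expands the compensator only to first order, writing its integrand as $\int_0^1\langle(V_{\delta\epsilon})_x(\theta\Gamma_2(s,u)+Z(s))-(V_{\delta\epsilon})_x(Z(s)),\Gamma_2(s,u)\rangle\,\d\theta$ and bounding it by $2|\Gamma_2(s,u)|$ using only $|(V_{\delta\epsilon})_x|\leq1$, so that one may take $\epsilon=\triangle^{1/p}$ and already obtain the full rate $\triangle$ on $[0,\tau]$.

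Second, you separate the nonlinear delay factors $V_1,V_3$ from $|Z(s-\tau)|$ by Cauchy--Schwarz, leaving terms $(\mathbb{E}|Z(s-\tau)|^{2p})^{1/2}$ (and effectively $4p$-moments after your $J_2$ term) in the inequality. Iterating such an inequality doubles (or quadruples) the required moment index at each delay interval, so the best exponent this bootstrap can produce decays like $2^{-[T/\tau]}$, which is strictly weaker than the claimed $\triangle^{1/(1+\theta)^{[T/\tau](1+\alpha)}}$ (a rate arbitrarily close to $\triangle$ when $\theta,\alpha$ are small). The paper's device is to apply the H\"older inequality with conjugate exponents $(1+\theta)/\theta$ and $1+\theta$ when splitting $\mathbb{E}\big(V_i^p(\tilde Z(s-\tau))|Z(s-\tau)|^p\big)$, so that the delay terms involve only $(\mathbb{E}|Z(s-\tau)|^{p(1+\theta)})^{1/(1+\theta)}$; the moment index then inflates only by the factor $1+\theta$ per interval, and with $p_i=p(1+\theta)^{([T/\tau]+1-i)(1+\alpha)}$ (the role of $\alpha$ being to make $(1+\theta)p_{i+1}<p_i$ strict so that Jensen's inequality applies at each step) the iteration delivers exactly the advertised exponent. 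Your mention of a Young/H\"older splitting with exponent $1+\theta$ gestures at this, but as organized—with the $\epsilon^{-p}$ term from $J_2$ and the Cauchy--Schwarz delay terms—the plan cannot reach the stated rate.
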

\begin{proof}
The proof of Theorem \ref{Th3} is similar to that of Theorem
\ref{Th2}, while we give a sketch of the proof  to highlight the
differences between the Brwonian motion case. Set
\begin{equation*}
Z(t):=X(t)-Y(t),\ \ \ \bar{Z}(t):=Y(t)-\bar{Y}(t),\ \ \
\tilde{Z}(t):=(X(t),\bar{Y}(t))\in\mathbb{R}^{2n},\ \ \ t\in[0,T].
\end{equation*}
Define  for $t\in[0,T]$
\begin{equation*}
\Gamma_1(t):=b(X(t),X(t-\tau))-b(\bar{Y}(t),\bar{Y}(t-\tau))
\end{equation*}
and
\begin{equation*}
\Gamma_2(t,u):=h(X(t),X(t-\tau),u)-h(\bar{Y}(t),\bar{Y}(t-\tau),u).
\end{equation*}
For $V_{\delta\epsilon}\in C^2(\mathbb{R}^n;\mathbb{R}_+)$, defined
by \eqref{eq21}, the It\^o formula and the Taylor expansion give
that for $t\in[0,T]$
\begin{equation}\label{eq28}
\begin{split}
&V_{\delta\epsilon}(Z(t))
=\int_0^t\langle(V_{\delta\epsilon})_x(Z(s)),\Gamma_1(s)\rangle\d
s+\int_0^t\int_U\{V_{\delta\epsilon}(Z(s)+\Gamma_2(s,u))\\
&\quad-V_{\delta\epsilon}(Z(s))-\langle(V_{\delta\epsilon})_x(Z(s)),\Gamma_2(s,u)\rangle\}\lambda(\d
u)\d s\\
&\quad+\int_0^t\int_U\{V_{\delta\epsilon}(Z(s)+\Gamma_2(s,u))-V_{\delta\epsilon}(Z(s))\}\tilde{N}(\d u,\d s)\\
&=\int_0^t\langle(V_{\delta\epsilon})_x(Z(s)),\Gamma_1(s)\rangle\d
s\\
&\quad+\int_0^t\int_U\Big\{\int_0^1\langle(V_{\delta\epsilon})_x(\theta\Gamma_2(s,u)+Z(s))-(V_{\delta\epsilon})_x(Z(s)),\Gamma_2(s,u)\rangle\d
\theta\Big\}\lambda(\d
u)\d s\\
&\quad+\int_0^t\int_U\Big\{\int_0^1\langle(V_{\delta\epsilon})_x(\theta\Gamma_2(s,u)+Z(s)),\Gamma_2(s,u)\rangle\d
\theta\Big\}\tilde{N}(\d u,\d s).
\end{split}
\end{equation}
By \eqref{eq28}, together with \eqref{eq4} and \eqref{eq6}, we
then deduce that
\begin{equation*}
\begin{split}
|Z(t)|&\leq
\epsilon+V_{\delta\epsilon}(Z(t))\\
&\leq\epsilon+\int_0^t|\Gamma_1(s)|\d
s+2\int_0^t\int_U|\Gamma_2(s,u)|\lambda(\d
u)\d s\\
&\quad+\int_0^t\int_U\Big\{\int_0^1\langle(V_{\delta\epsilon})_x(\theta\Gamma_2(s,u)+Z(s)),\Gamma_2(s,u)\rangle\d
\theta\Big\}\tilde{N}(\d u,\d s),\ \ \ t\in[0,T].
\end{split}
\end{equation*}
Furthermore, note from \eqref{eq19}, \eqref{eq33} and \eqref{eq25}
that for any $q\geq2$
\begin{equation*}
\mathbb{E}\Big(\sup\limits_{0\leq t\leq
T}V_1^q(\tilde{Z}(s-\tau))\Big)+\mathbb{E}\Big(\sup\limits_{0\leq
t\leq T}V_3^q(\tilde{Z}(s-\tau))\Big)\leq C.
\end{equation*}
Consequently, for any $p\geq2$ and $t\in[0,T]$, using \eqref{eq6}
and \eqref{eq26}, Lemma \ref{Kunita inequality} and the H\"older
inequality,   $(A3)$ and $(A4),$  we derive at
\begin{equation*}
\begin{split}
\mathbb{E}\Big(\sup\limits_{0\leq s\leq t}|Z(s)|^p\Big)&\leq2^{p-1}
(\epsilon^p+\mathbb{E}\Big(\sup\limits_{0\leq s\leq t}V_{\delta\epsilon}^p(Z(s))\Big)\\
&\leq C\Big\{\epsilon^p+\int_0^t\mathbb{E}|\Gamma_1(s)|^p\d
s+\int_0^t\int_U\mathbb{E}|\Gamma_2(s,u)|^p\lambda(\d
u)\d s\\
&\quad+\mathbb{E}\Big(\int_0^t\int_U|\Gamma_2(s,u)|^2\lambda(\d u)\d
s\Big)^{\frac{p}{2}}\Big\}\\
&\leq C\Big\{\epsilon^p+\int_0^t\mathbb{E}|\Gamma_1(s)|^p\d
s+\int_0^t\int_U\mathbb{E}|\Gamma_2(s,u)|^p\lambda(\d u)\d s\Big\}\\
&\leq
C\Big\{\epsilon^p+\int_0^t\mathbb{E}(|X(s)-\bar{Y}(s)|\\
&\quad+V_1(\tilde{Z}(s-\tau))|X(s-\tau)-\tilde{Y}(s-\tau)|)^p\d
s+\int_0^t\mathbb{E}(|X(s)-\bar{Y}(s)|\\
&\quad+V_3(\tilde{Z}(s-\tau))|X(s-\tau)-\tilde{Y}(s-\tau)|)^p\d
s\Big\}\\
&\leq
C\Big\{\epsilon^p+\triangle+\int_0^t\{\mathbb{E}|Z(s)|^p+\mathbb{E}(V_1^p(\tilde{Z}(s-\tau))|Z(s-\tau)|^p)\\
&\quad+\mathbb{E}(V_1^p(\tilde{Z}(s-\tau))|\bar{Z}(s-\tau)|^p)+\mathbb{E}(V_3^p(\tilde{Z}(s-\tau))|Z(s-\tau)|^p)\\
&\quad+\mathbb{E}(V_3^p(\tilde{Z}(s-\tau))|\bar{Z}(s-\tau)|^p)\}\d
s\Big\}\\
&\leq
C\bigg\{\epsilon^p+\triangle+\int_0^t\mathbb{E}|Z(s)|^p\d s\\
&\quad+\int_0^t\Big\{\Big(\mathbb{E}|Z(s-\tau)|^{p(1+\theta)}\Big)^{\frac{1}{1+\theta}}+\Big(\mathbb{E}|\bar{Z}(s-\tau)|^{p(1+\theta)}
\Big)^{\frac{1}{1+\theta}}\Big\}\d s\bigg\},
\end{split}
\end{equation*}
where $\theta\in(0,1)$ is an arbitrary constant. An application of
the Gronwall inequality then gives that
\begin{equation}\label{eq31}
\begin{split}
\mathbb{E}\Big(\sup\limits_{0\leq s\leq t}|Z(s)|^p\Big)&\leq
C\bigg\{\triangle+\int_0^t\Big\{\Big(\mathbb{E}|Z(s-\tau)|^{p(1+\theta)}\Big)^{\frac{1}{1+\theta}}\\
&\quad+\Big(\mathbb{E}|\bar{Z}(s-\tau)|^{p(1+\theta)}
\Big)^{\frac{1}{1+\theta}}\Big\}\d s\bigg\},\ \ \ t\in[0,T]
\end{split}
\end{equation}
by taking $\epsilon=\triangle^{\frac{1}{p}}$. For $\theta\in(0,1)$
in \eqref{eq31} and any $\alpha\in(0,1)$, let
\begin{equation*}
p_i:=p(1+\theta)^{([T/\tau]+1-i)(1+\alpha)}, \ \ \
i=1,2,\cdots,[T/\tau]+1.
\end{equation*}
It is trivial to see that
\begin{equation}\label{eq32}
(1+\theta)p_{i+1}<p_i \ \mbox{ and } \ p_{[T/\tau]+1}=p, \ \ \
i=1,2,\cdots,[T/\tau].
\end{equation}
Noting that $Z(t)=\bar{Z}(t)=0$ for $t\in[-\tau,0]$, by \eqref{eq31}
we clearly get
\begin{equation*}
\mathbb{E}\Big(\sup\limits_{0\leq s\leq \tau}|Z(s)|^{p_1}\Big)\leq
C\triangle.
\end{equation*}
This, together with \eqref{eq26}, \eqref{eq31} and the H\"older
inequality, yields that
\begin{equation}\label{eq34}
\begin{split}
\mathbb{E}\Big(\sup\limits_{0\leq s\leq 2\tau}|Z(s)|^{p_2}\Big)&\leq
C\Big\{\triangle+\triangle^{\frac{1}{1+\theta}}+\int_0^{2\tau}\Big(\mathbb{E}|Z(s-\tau)|^{p_2(1+\theta)}\Big)^{\frac{1}{1+\theta}}\d s\Big\}\\
&\leq C\Big\{\triangle+\triangle^{\frac{1}{1+\theta}}+\int_0^{2\tau}\Big(\mathbb{E}|Z(s-\tau)|^{p_1}\Big)^{\frac{p_2}{p_1}}\d s\Big\}\\
&\leq C\{\triangle+\triangle^{\frac{1}{1+\theta}}+\triangle^{\frac{p_2}{p_1}}\}\\
&\leq C\triangle^{\frac{p_2}{p_1}},
\end{split}
\end{equation}
where the last step is due to \eqref{eq32}. Similarly, we have from
\eqref{eq31}-\eqref{eq34} that
\begin{equation*}
\begin{split}
\mathbb{E}\Big(\sup\limits_{0\leq s\leq 3\tau}|Z(s)|^{p_3}\Big)&\leq
C\Big\{\triangle+\triangle^{\frac{1}{1+\theta}}+\int_0^{3\tau}\Big(\mathbb{E}|Z(s-\tau)|^{p_3(1+\theta)}\Big)^{\frac{1}{1+\theta}}\d s\Big\}\\
&\leq C\Big\{\triangle+\triangle^{\frac{1}{1+\theta}}+\int_0^{3\tau}\Big(\mathbb{E}|Z(s-\tau)|^{p_2}\Big)^{\frac{p_3}{p_2}}\d s\Big\}\\
&\leq C\{\triangle+\triangle^{\frac{1}{1+\theta}}+\triangle^{\frac{p_3}{p_1}}\}\\
&\leq C\triangle^{\frac{p_3}{p_1}}.
\end{split}
\end{equation*}
Following the previous procedures gives that
\begin{equation*}
\mathbb{E}\Big(\sup\limits_{0\leq s\leq T}|Z(s)|^p\Big)\leq
C\triangle^{\frac{1}{(1+\theta)^{[T/\tau](1+\alpha)}}},
\end{equation*}
and the proof is therefore complete.
\end{proof}

\begin{rem}
{\rm By Theorem \ref{Th3}, with $p\geq2$ increasing the convergence
rate of EM scheme \eqref{eq22} is decreasing, which is quite
different from the Brownian motion case with a constant order
$\frac{1}{2}$, and it is therefore best to use the mean-square
convergence for the jump case. On the other hand, we reveal that the
order of mean-square convergence is close to $\frac{1}{2}$ although
the jump diffusion may be highly nonlinear with respect to the delay
variables. }
\end{rem}

\end{document}